\pgfplotsset{compat=1.9}
\newcommand{\E}[2]{\ensuremath{\mathbb{E}_{#2}[#1]}}
\newcommand{\Cov}[1]{\ensuremath{\text{Cov}[#1]}}
\newcommand{\R}[1]{\ensuremath{\mathbb{R}^{#1}}}
\newcommand{\tr}[1]{\ensuremath{\text{tr}\Big(#1 \Big)}}
\newcommand{\bs}[1]{\boldsymbol{#1}}
\newtheorem{proposition}{Proposition}
\newtheorem{problem}{Problem}
\newtheorem{remark}{Remark}
\DeclareMathOperator*{\Minimize}{Minimize}
\DeclareMathOperator{\st}{subject\:to}
\title{Covariance Steering of Discrete-Time Stochastic Linear Systems Based on Distribution Distance Terminal Costs}
\author{Isin M. Balci and Efstathios Bakolas\thanks{I. M. Balci (graduate student) and E. Bakolas (Associate Professor) are with the Department of Aerospace Engineering
and Engineering Mechanics, The University of Texas at Austin,
Austin, Texas 78712-1221, USA, Emails: isinmertbalci@utexas.edu; bakolas@austin.utexas.edu} \thanks{This research  has been supported  in part by NSF  awards ECCS-1924790 and CMMI-1937957.}}
\begin{document}

\maketitle

\begin{abstract}
    We consider a class of stochastic optimal control problems for discrete-time stochastic linear systems which seek for control policies that will steer the probability distribution of the terminal state of the system close to a desired Gaussian distribution. In our problem formulation, the closeness between the terminal state distribution and the desired (goal) distribution is measured in terms of the squared Wasserstein distance which is associated with a corresponding terminal cost term. We recast the stochastic optimal control problem as a finite-dimensional nonlinear program and we show that its performance index can be expressed as the difference of two convex functions. This representation of the performance index allows us to find local minimizers of the original nonlinear program via the so-called convex-concave procedure~\cite{ccp}. Subsequently, we consider a similar problem but this time we use a terminal cost that corresponds to the KL divergence. Finally, we present non-trivial numerical simulations to demonstrate the proposed techniques and compare them in terms of computation time. 
\end{abstract}
\section{Introduction}

We consider covariance steering problems for discrete-time stochastic linear systems in which, however, the constraints on the terminal state covariance are enforced indirectly by means of appropriate terminal costs. Specifically we consider the problem of steering the state of a stochastic system, which is originally drawn from a given Gaussian distribution, to a terminal state whose distribution is ``close'' to a desired (prescribed) Gaussian distribution, where the closeness between the two distributions is measured in terms of the squared Wasserstein distance or the Kullback-Leibler divergence. We show that the resulting problems can be reduced to tractable optimization problems which can be solved efficiently if one exploits their structure.

\textit{Literature Review:} The main focus of the first attempts to study covariance steering problems \cite{min-var-control,all-cov-controllers, lyapunov-covariance-controllers} was on finding stabilizing controllers that drive the state covariance to a desired positive definite matrix asymptotically (infinite-horizon case). Finite-horizon covariance control problems for continuous-time linear systems were recently studied in \cite{chen-optimal-1, chen-optimal-2, chen-optimal-3,bakolas-integral-quadratic}. 
Covariance steering problems for discrete-time systems are also receiving significant attention at present. In \cite{bakolas-auto2018}, the constrained covariance steering problem is recast as a finite dimensional convex optimization problem based on a semidefinite relaxation of the constraint on the terminal state covariance. Covariance steering problems with convex chance constraints are studied in \cite{okamoto2018chance}. 

In the previously discussed references, the specifications on the terminal state covariance correspond to hard constraints which often lead to difficult problems (for instance, the analytic solution to the covariance steering problem presented in \cite{chen-optimal-1} is only valid for the special case in which the input and noise channels coincide). An alternative problem formulation, which has inspired this paper, is presented in \cite{halder2016finite} in which a terminal cost is used as a ``soft'' constraint on the terminal state covariance. The latter cost corresponds to the squared Wasserstein distance between a desired state distribution and the ``actual'' terminal state distribution. The latter formulation leads to a standard two-point boundary value problem which can be solved by means of indirect shooting methods. It is well known that the success of such methods relies on knowledge of good initial guesses and thus, in general, a systematic process for the computation of the solution to the class of covariance steering problems proposed in \cite{halder2016finite} with soft terminal constraints is still missing.

\textit{Main Contribution:} We first formulate the covariance steering problem as a stochastic optimal control problem in which the requirement on the terminal state covariance is encoded in a terminal cost term (``soft constraint''). Similarly with \cite{halder2016finite}, we consider the case in which the terminal cost corresponds to the squared Wasserstein distance between the actual terminal state distribution and the desired Gaussian distribution but in contrast with the latter reference, we consider the discrete-time case. First, we recast this stochastic optimal control problem as a (deterministic) nonlinear program by utilizing an affine state feedback control policy parametrization (the control input at each stage is an affine function of the history of visited states). Then, we show that the performance index of the nonlinear program can be expressed as the difference of two convex functions by using a suitable bilinear transformation of the decision variables. To the best of our knowledge, this is the first paper that shows that covariance steering problems can be formulated as a difference of convex functions program (DCP). By leveraging this fact, one can find local minimizers of the nonlinear program via efficient techniques such as the so-called convex-concave procedure~ (CCP)~\cite{ccp, dccp-boyd}. The CCP is an iterative procedure  which can compute local minimizers of non-convex optimization problems which correspond to DCP based on successive convexifications. Exploiting this extra structure of the problem reduces its complexity and allows us to use convex optimization solvers which in turn leads to improved scalability and numerical efficiency.

Finally, we consider the same class of problems when the terminal cost corresponds to the Kullback-Leibler divergence, which is used as a measure of the closeness between the terminal state distribution and the goal distribution (one can also consider different generalized distance metrics between the two distributions; a review of distance metrics on probability distributions can be found in \cite{prob-metrics-review}). Even though the resulting nonlinear program does not corresponds to a DCP, we show empirically, that one can compute its local minimizers by using interior-point methods for nonlinear programs.

 \textit{Outline:} The rest of the paper is organized as follows. Section II presents the problem formulation. In Section III, we show that when the terminal cost is the squared Wasserstein distance, the covariance steering problem can be associated with a difference of convex functions program. In Section IV, we provide an alternative problem formulation in which the terminal cost corresponds to the KL divergence. In Section V, we present numerical simulations. Finally, Section VI concludes the paper with a summary of remarks and future research directions.

\section{Problem Formulation}
\subsection{Notation}
We denote by $\mathbb{R}^n$ the set of $n$-dimensional real vectors and by $\mathbb{R}$ and $\mathbb{R}^{+}$ (resp., $\mathbb{R}^{++}$) the set of real numbers and non-negative (resp., strictly positive) real numbers, respectively. The sets of non-negative and strictly positive integers are denoted by $\mathbb{Z}^{+}$ and $\mathbb{Z}^{++}$, respectively. We denote by $\mathbb{E}[\cdot]$
the expectation operator. Given a random vector $x$, we denote its mean vector and covariance matrix by $\mathbb{E}[x]$ and $\operatorname{Cov}[x]$, respectively. The space of $n \times n$ symmetric matrices is denoted by $\mathbb{S}_n$ and the cone of positive semi-definite (definite) symmetric matrices by $\mathbb{S}_n^+$ ($\mathbb{S}_n^{++}$). The trace of a square matrix is denoted as $\operatorname{tr(\cdot)}$. The transpose of a matrix $A \in \mathbb{R}^{n \times m}$ is denoted by $A^{\mathrm{T}}$ and its nuclear norm by $\| A \|_{*}$ where $\| A \|_{*}:=\operatorname{tr}((A^{\mathrm{T}} A)^{1/2})$. The block diagonal matrix formed by $n$ matrices $A_1, \dots, A_n$ is denoted by $\operatorname{blkdiag}(A_1, \dots, A_n)$. The zero matrix is denoted as $\boldsymbol{0}$ whereas the identity matrix as $\boldsymbol{I}$. We write $ x \sim \mathcal{N}(\mu, S)$ to denote that $x$ is a Gaussian random vector with mean $\mu \in \mathbb{R}^n$ and covariance $S \in \mathbb{S}_n^{++}$. 

\subsection{Distance Between Probability Distributions}
In this paper, we will formulate stochastic optimal control problems with terminal costs that measure the closeness between the final state distribution and a desired probability distribution. In particular, we will consider two different distribution (generalized) distance functions, namely, the Wasserstein distance and the KL divergence.

\subsubsection{Wasserstein Distance Between Two Distributions}
The Wasserstein distance between two probability measures is a valid distance metric (in the strict mathematical sense) because it satisfies all of the properties of a metric. Given two random vectors $x_1$, $x_2$ over $\R{n}$ with probability density functions $\rho_1 , \rho_2$, their squared Wasserstein distance is defined as follows:
\begin{equation}
    W^2(\rho_1, \rho_2) := \inf_{\rho \in \mathcal{P}(\rho_1, \rho_2)} \E{\|x_1 - x_2 \|_2^2}{y},
\end{equation}
where $y := [x_1,~ x_2]^{\mathrm{T}}$ and has a probability density function (pdf) $\rho : \R{2n} \rightarrow \R{+}$. Furthermore, $\mathcal{P}(\rho_1, \rho_2)$ denotes the set of all probability distributions over $\mathbb{R}^{2n}$ with finite second moments and marginals $\rho_1$ and $\rho_2$ on $x_1$ and $x_2$, respectively. 

If $x_i \sim \mathcal{N}(\mu_i, S_i)$ for $i={1,2}$ where $\mu_i\in\mathbb{R}^n$ and $S_i \in \mathbb{S}^{++}_n$, then the squared Wasserstein distance is given by~\cite{wasserstein-closed-form}
\begin{align}\label{wass-cost}
    W^2(\rho_1, \rho_2) & = \|\mu_1 - \mu_2 \|_2^2 \nonumber \\
    &~~~+ \tr{S_1 + S_2 - 2 (S_2^{1/2} S_1 S_2^{1/2})^{1/2}}.
\end{align}

\subsubsection{Kullback-Leibler Divergence}
The KL divergence is not a metric in the strict mathematical sense (it does not enjoy the symmetry property) but it is often used to compute the ``distance'' between two distributions because of its ease of computation. In particular, given two probability distributions with density functions $\rho_1(x)$ and $\rho_2(x)$, their KL divergence is defined as:
\begin{equation}
\mathrm{KL}(\rho_1 || \rho_2) = \int \rho_1(x) \log\bigg(\frac{\rho_1(x)}{\rho_2(x)}\bigg) \mathrm{d}x.
\end{equation}
where $\rho_2(x) > 0$ over the domain of integration. 

When $\rho_1$ and $\rho_2$ correspond to the densities of two Gaussian distributions $\mathcal{N}(\mu_1,S_1)$ and $\mathcal{N}(\mu_2,S_2)$ where $\mu_i \in \mathbb{R}^{n}$ and $S_i \in \mathbb{S}^{++}_{n}$ is given by 
\begin{align}\label{KL-div}
  & \mathrm{KL}(\rho_1 || \rho_2) =
    (1/2) \big[\tr{S_2^{-1} S_1} + (\mu_2 - \mu_1)^{\mathrm{T}} S_2^{-1} (\mu_2 - \mu_1)
    \nonumber \\
  &  ~~~~~~~\qquad \qquad \qquad  - n + \log\big(\det (S_2)/\det   (S_1)\big)\big].
\end{align}



\subsection{Problem Statement}
We consider an uncertain system whose dynamics is described by the following discrete-time stochastic linear state space model:
\begin{equation}\label{linear-system}
    x_{k+1} = A_k x_k + B_k u_k + G_k w_k,~~~~\forall k \in \mathbb{Z}^{+},
\end{equation}
where $\{x_k\}_{k\in \mathbb{Z}^{+}}$ is the state (random) process over $\mathbb{R}^{n_x}$, $\{u_k \}_{k\in \mathbb{Z}^{+}}$ is the input process over $\mathbb{R}^{n_u}$ and $\{w_k\}_{k\in \mathbb{Z}^{+} }$ is the noise (random) process over $\mathbb{R}^{n_w}$. In particular, $\{w_k\}_{k\in \mathbb{Z}^{+} }$ corresponds to a white Gaussian noise process with $\mathbb{E}[w_k]=0$ and $\mathbb{E}[w_k w_m^{\top}] = \delta (k,m) \bs{I}$, where $\delta(k,m) = 1$ when $k=m$ and $\delta(k,m)=0$, otherwise. We also assume that the initial state $x_0 \sim \mathcal{N}(\mu_0, S_0)$ and that $x_0$ and $\{w_k\}$ are mutually independent, which implies that $\mathbb{E}[x_0 w_k^{\top}]=\mathbf{0}$ for all $k \in \mathbb{Z}^{+}$.

Our objective is to drive the uncertain state of the system \eqref{linear-system} from its given initial distribution to a terminal distribution which is close to a desired terminal Gaussian probability distribution $\mathcal{N}(\mu_d, S_d)$, where $\mu_d \in \mathbb{R}^n$ and $S_d\in\mathbb{S}^{++}_n$ are given, at a given finite time while minimizing a relevant performance index. Next, we provide the precise formulation of our problem.

\begin{problem}\label{prob-def-1}
Let $\mu_0, \mu_f \in \R{n_x}$, $S_{0}, S_f \in \mathbb{S}_{n_x}^{++}$, $\lambda > 0$ and $ N \in \mathbb{Z}^{++}$ be given. In addition, let $\Pi$ denote the set of all admissible control policies $\pi :=\{m_0(\cdot), \dots, m_{N-1}(\cdot) \}$ for system \eqref{linear-system}, with $u_k = m_k(X^k)$ where $X^k$ denotes the (finite) sequence of states visited up to stage $t=k$, that is, $X^k := \{x_0, x_1, \dots x_k \}$, and $m_k(X^k)$ are measurable functions of the elements of $X^k$, for $k=0, \dots, N-1$. Then, find a control policy $\pi^* \in \Pi$ 
that solves the following stochastic optimal control problem:
\begin{subequations}\label{opt1}
\begin{align}
&\Minimize_{\pi \in \Pi}          &\qquad& \mathbb{E}\Bigg[{\sum_{k=0}^{N-1} u^{\mathrm{T}}_k u_k}\Bigg] + \lambda \varphi (\rho_N, \rho_d)\\
& \st &      & x_{k+1} = A_k x_k + B_k u_k + G_k w_k \\
&                  &      & x_0 \sim \mathcal{N}(\mu_0, S_0) 
\end{align}
\end{subequations}
where $\rho_d$ is the pdf of the Gaussian probability distribution $\mathcal{N}(\mu_f, S_f)$ (desired state distribution), $\rho_N$ is the pdf of the terminal state $x(N)$, and $\varphi(\rho_N, \rho_d)$ denotes the (generalized) distance between the probability distributions of the desired state and the actual terminal state of the system. In particular, $\varphi(\rho_N, \rho_d)= W^2(\rho_N, \rho_d)$ or $\varphi(\rho_N, \rho_d)= \mathrm{KL}(\rho_N || \rho_d)$.
\end{problem}

In order to associate Problem \ref{prob-def-1} with a tractable, finite-dimensional optimization problem, we only consider admissible control policies that correspond to sequences of control laws $m_k(\cdot)$ which are affine functions of the state history:
\begin{align}\label{controller}
    m_k(X^k) = \sum_{i=0}^{k} K(k, i) \big(x_i-\bar{x}_i\big) + u_{\mathrm{ff}}(k),
\end{align}
where $\bar{x}_i=\E{x_i}{}$. Next, we show the main steps for recasting the Problem~1, whose decision variable corresponds to the control policy $\pi$, as an optimization problem whose decision variables are the controller parameters $u_{\mathrm{ff}}(k) \in \R{n_u}$ and $K(k, j) \in \R{n_u \times n_x}$, $\forall k \geq j \in  \{0, \dots , N-1 \}$.


\section{Covariance Steering Based on a Wasserstein Distance Terminal Cost}
In this section, we will show that Problem~1 when $\varphi(\rho_N, \rho_d)= W^2(\rho_N, \rho_d)$ can be associated with a difference of convex function program (DCP), that is, a nonlinear program whose performance index is equal to the difference of two convex functions. This will allow us to efficiently compute local minimizers of Problem~1 by means of heuristic and easily implementable algorithms, such as the convex-concave procedure \cite{ccp}. It is worth mentioning that the set of objective functions which can be expressed as the difference of convex functions is dense in the set of continuous functions; moreover, every twice differentiable function can be represented as the difference of convex functions~\cite{dc-prog-overview}. However, there is no systematic process that is guaranteed to find such a representation for a given function of interest except for special classes of functions. 

Next, we recast Problem~\ref{prob-def-1} as a finite-dimensional optimization problem. To this aim, we express the state $x_k$ in terms of a finite-dimensional decision variable. In particular, by propagating forward in time the state of the discrete-time stochastic system~\eqref{linear-system} and using the control policy parametrization given in \eqref{controller}, we can express $x_k$ as a function of $x_0$, $\{ u_i \}_{i=0}^{k-1}$ and $\{ w_i \}_{i=0}^{k-1}$ as follows:
\begin{equation}\label{xkfunc}
    x_k = \Phi(k, 0) x_0 + \sum_{i=0}^{k-1} \Phi(k,i) B_{i} u_{i} + \sum_{i=0}^{k-1} \Phi(k,i) G_i w_i,
\end{equation}
where $\Phi(k, n) \triangleq A_{k-1} \dots A_{n}$, $\Phi(n,n) = \bs{I}$ with $ k \geq n$ for $k,n \in \mathbb{Z}^{+}$. Now, let us define the following quantities:
\begin{subequations}\label{vars}
\begin{align}
    \boldsymbol{x} & := [x(0)^{\mathrm{T}}, x(1)^{\mathrm{T}}, \dots, x(N)^{\mathrm{T}}]^{\mathrm{T}} \in \R{n_x  (N+1)},\\
     \boldsymbol{u} & := [u(0)^{\mathrm{T}}, u(1)^{\mathrm{T}}, \dots, u(N-1)^{\mathrm{T}}]^{\mathrm{T}} \in \R{n_u N}, \\
     \boldsymbol{w} & := [w(0)^{\mathrm{T}}, w(1)^{\mathrm{T}}, \dots, w(N-1)^{\mathrm{T}}]^{\mathrm{T}} \in \R{n_w N}.
 \end{align}
\end{subequations}
By using equations~\eqref{xkfunc}-\eqref{vars}, it follows that
\begin{equation}\label{opt-form}
    \boldsymbol{x} = \boldsymbol{\Gamma} x_0 + \boldsymbol{H_u} \boldsymbol{u} + \boldsymbol{H_w} \boldsymbol{w},
\end{equation}
where 
\begin{equation}
    \bs{\Gamma}  := [\bs{I}~\Phi(1,0)~\Phi(2,0)~\dots~ \Phi(N,0)],
\end{equation}
\begin{align}
    \bs{H_u} & := \begin{bmatrix}
    \bs{0} & \bs{0} & \dots & \bs{0} \\
    B_0 & \bs{0} & \dots & \bs{0} \\
    \Phi(2,1)B_0 & B_1 & \dots & \bs{0} \\
    \vdots & \vdots & \vdots & \vdots \\
    \Phi(N, 1)B_0 & \Phi(N, 2)B_1 & \dots & B_{N-1}  
    \end{bmatrix}, \label{Hudef}
\end{align}
and $\bs{H_w}$ is defined similarly, after replacing the matrices $B_i$ in \eqref{Hudef} with the matrices $G_i$. One can refer to~\cite{bakolas-auto2018} for the details on the derivation of \eqref{opt-form}-\eqref{Hudef}.

Because the performance index of Problem \ref{prob-def-1} consists of a terminal cost term, we will use the following equation:
\begin{equation}
    x(N) = \boldsymbol{F} \boldsymbol{x},~~~\boldsymbol{F} := [\boldsymbol{0} \cdots ~\boldsymbol{0} ~ \boldsymbol{I}],
\end{equation}
%
to recover $x(N)$ from $\boldsymbol{x}$.

Given the particular affine parametrization of the control policy as in \eqref{controller} and the fact that the initial state is assumed to be a Gaussian (random) vector, it follows that the states of the system in the subsequent stages will also be Gaussian (random) vectors. In addition, we obtain
\begin{align}
    \boldsymbol{u} = \boldsymbol{K} (\boldsymbol{x} - \boldsymbol{\Bar{x}}) + \boldsymbol{u}_{\mathrm{ff}}, 
\end{align}
where $\boldsymbol{\Bar{x}} := \E{\boldsymbol{x}}{}$, $\bs{u}_{\mathrm{ff}} := [u_{\mathrm{ff}}^{\mathrm{T}}(0), \dots, u_{\mathrm{ff}}^{\mathrm{T}}(N-1)]^{\mathrm{T}}$ and
\begin{align}\label{Kdef}
    \bs{K} &:= \begin{bmatrix}
    K(0,0) & \bs{0} & \dots &  \bs{0} \\
    K(1,0) & K(1,1) & \dots &  \bs{0} \\
    K(2,0) & K(2,1) & \dots &  \bs{0} \\
    \vdots & \vdots & \vdots &  \vdots \\
    K(N-1,0) & K(N-1,1) &  \dots & \bs{0}
    \end{bmatrix}.
\end{align}
We proceed with the derivation of the expression of the performance index of Problem~1 in terms of the new decision variables. To this aim, we write $\sum_{k=0}^{N-1} u_k^\mathrm{T} u_k = \boldsymbol{u}^T \boldsymbol{u}$, which in view of basic properties of trace operator and \eqref{controller} gives
\begin{align}\label{raw-Euud}
    \E{\boldsymbol{u}^\mathrm{T} \boldsymbol{u}}{} & = \E{\operatorname{tr}( \boldsymbol{u}\boldsymbol{u}^\mathrm{T})}{} \nonumber \\
    & = \E{\operatorname{tr}(     (\boldsymbol{K} (\boldsymbol{x} - \boldsymbol{\Bar{x}}) + \boldsymbol{u}_{\mathrm{ff}})  
    (\boldsymbol{K} (\boldsymbol{x} - \boldsymbol{\Bar{x}}) + \boldsymbol{u}_{\mathrm{ff}})^\mathrm{T}    )}{} \nonumber \\
   & = 
    \operatorname{tr}(\boldsymbol{K} \E{\Tilde{\boldsymbol{x}} \Tilde{\boldsymbol{x}}^\mathrm{T}}{} \boldsymbol{K}^\mathrm{T}) + \| \boldsymbol{u}_{\mathrm{ff}}\|_2^2,
\end{align}
where $\Tilde{\boldsymbol{x}} := \boldsymbol{x} - \boldsymbol{\Bar{x}}$ and in the derivation of the last equality, we have used the fact that $\boldsymbol{u}_{\mathrm{ff}}$ is a deterministic quantity.

For the computation of $\operatorname{Cov}[\bs{x}] = \E{\Tilde{\boldsymbol{x}} \Tilde{\boldsymbol{x}}^\mathrm{T}}{}$, we first have to compute $\Bar{\boldsymbol{x}} = \E{\bs{x}}{}$. By taking expectation of both sides of \eqref{opt-form}, we obtain:
\begin{align}\label{Ex}
    \E{\boldsymbol{x}}{} & = \E{\boldsymbol{\Gamma} x_0 + \boldsymbol{H_u} (\bs{K} (\bs{x} - \bs{\Bar{x}}) + \bs{u}_{\mathrm{ff}}) + \bs{H_w w}}{} \nonumber \\
&= \bs{\Gamma} \mu_0 + \bs{H_u u}_{\mathrm{ff}}.
\end{align}
After some simple algebraic manipulations, we get:
\begin{equation}
    \Tilde{\bs{x}} = (\bs{I} - \bs{H_u K})^{-1} ( \bs{\Gamma} (x_0 -\mu_0) + \bs{H_w w}).
\end{equation}
Let $\Bar{\bs{K}} := (\bs{I} - \bs{H_u K})^{-1}$ and $\Bar{x}_0 := x_0 - \mu_0$. We obtain:
\begin{equation}\label{Covx}
    \E{\Tilde{\bs{x}} \Tilde{\bs{x}}^\mathrm{T}}{} = 
    \Bar{\bs{K}} (\Gamma S_0 \Gamma^\mathrm{T} + \bs{H_w}S_{\bs{w}} \bs{H_w}^\mathrm{T}) \Bar{\bs{K}}^\mathrm{T}.
\end{equation}
From \eqref{Ex} and \eqref{Covx},
we can obtain the following expressions for $\mu_N :=\E{x(N)}{}$ and $S_N :=\Cov{x(N)}$: 
\begin{subequations}\label{mufandSf}
\begin{align}
    \mu_N & = \boldsymbol{F} (\bs{\Gamma} \mu_0 + \bs{H_u u}_{\mathrm{ff}}), \label{ExN}\\
     S_N & = \boldsymbol{F} (\bs{I} - \bs{H_u K})^{-1} \Tilde{S} (\bs{I} - \bs{H_u K})^\mathrm{-T} \boldsymbol{F}^\mathrm{T}, \label{CovxN}
\end{align}
\end{subequations}
where $\Tilde{S} = (\Gamma S_0 \Gamma^\mathrm{T} + \bs{H_w}S_{\bs{w}} \bs{H_w}^\mathrm{T})$ and $S_{\bs{w}} = \E{\bs{w} \bs{w}^\mathrm{T}}{}$. 
By plugging \eqref{Covx} into \eqref{raw-Euud}, we have: 
\begin{align}\label{Euu}
    \E{\bs{u}^\mathrm{T} \bs{u}}{} & = \operatorname{tr} (\bs{K} (\bs{I} - \bs{H_u K})^{-1} \Tilde{S} (\bs{I} - \bs{H_u K})^\mathrm{-T} \bs{K}^\mathrm{T}) \nonumber \\
    &~~~~ +  \| \bs{u}_{\mathrm{ff}} \|^2.
\end{align}

After plugging the expressions of $\mu_N$ and $S_N$ in \eqref{ExN} and \eqref{CovxN} into the expression of $ W^2(\rho_N, \rho_d)$ in the case of Gaussian distributions, which is given in \eqref{wass-cost}, we get:
\begin{align}\label{wass-K}
    & W^2(\rho_N, \rho_d) =~ \lVert \bs{F} (\bs{\Gamma} \mu_0 + \bs{H_u} \bs{u}_{\mathrm{ff}}) - \mu_d \rVert^2_2 \nonumber \\
    & + \operatorname{tr} (\bs{F} (\bs{I} - \bs{H_u K})^{-1} \Tilde{S} 
    \bs{F} (\bs{I} - \bs{H_u K})^\mathrm{-T} \bs{F}^\mathrm{T} +  S_d) \nonumber \\
    & -2 \operatorname{tr}(\sqrt{S_d} ~ \times \nonumber \\
    & \quad (\bs{F} (\bs{I} - \bs{H_u K})^{-1} \Tilde{S} 
    \bs{F} (\bs{I} - \bs{H_u K})^\mathrm{-T} \bs{F}^\mathrm{T})^{1/2} \nonumber \\
    &\quad \times \sqrt{S_d}).
\end{align}

At this point, we propose to apply a variable transformation, which was first proposed in \cite{boyd-cvx-control} and later used for covariance steering problems in \cite{bakolas-auto2018}, to convexify the optimization problem. In particular, we a new  transformed variable, $\bs{\Theta}$, which is defined as follows:
\begin{subequations}\label{transformations}
\begin{gather}
    \bs{\Theta} := \bs{K}(\bs{I} - \bs{H_u K})^{-1} =: \varphi (\bs{K}) \label{eqx}\\
    \bs{K} := (\bs{I} + \bs{H_u \Theta})^{-1} \bs{\Theta} =: \phi(\bs{\Theta}). \label{eqinvtrans}
\end{gather}
Furthermore, by using the identity $(\bs{I} + \bs{P})^{-1} = \bs{I} - \bs{P} (\bs{I} + \bs{P})^{-1}$, we obtain:
\begin{align}
    (\bs{I} - \bs{H_u K})^{-1} & = \bs{I} + \bs{H_u K} (\bs{I} - \bs{H_u K})^{-1} \nonumber \\ 
     & = (\bs{I} + \bs{H_u \Theta}).
\end{align}
\end{subequations}
As is shown in \cite{boyd-cvx-control}, the functions $\phi (\cdot)$ and $\varphi (\cdot)$ determine a bijective transformation, that is,  $\phi(\cdot) = \varphi^{-1}(\cdot)$ and vice versa. Therefore, the right hand sides of equations (\ref{Euu}) and (\ref{wass-K}) can be expressed equivalently in terms of transformed variables (\ref{transformations}) as follows:
\begin{align}
    & \E{\bs{u}^\mathrm{T} \bs{u}}{}  = \operatorname{tr} (\bs{\Theta} \Bar{S} \bs{\Theta}^\mathrm{T}) + \bs{u}_{\mathrm{ff}}^\mathrm{T} \bs{u}_{\mathrm{ff}} \\
   & W^2  = \lVert \bs{F}(\bs{\Gamma}\mu_0 + \bs{H_u} \bs{u}_{\mathrm{ff}}) -\mu_d \rVert_2^2 \nonumber \\
    & + \operatorname{tr}(\bs{F} (\bs{I} + \bs{H_u \Theta}) \Tilde{S} (\bs{I} + \bs{H_u \Theta})^\mathrm{T} \bs{F}^\mathrm{T}) \nonumber \\
    & - 2 \operatorname{tr} ((\sqrt{S_d} \bs{F} (\bs{I} + \bs{H_u \Theta}) \Tilde{S} (\bs{I} + \bs{H_u \Theta})^\mathrm{T} \bs{F}^\mathrm{T} \sqrt{S_d})^{1/2}) \nonumber \\
    & + \operatorname{tr}(S_d).
\end{align}

\begin{remark}
It should be noted that $\bs{K}$ is a block lower triangular matrix whose last $n_x$ columns are equal to $0$. If we examine equation \eqref{eqinvtrans}, we observe that $(\bs{I} - \bs{H_u K})^{-1}$ is block lower triangular since $\bs{H_u}$ is also block lower triangular, which implies that $(\bs{I} - \bs{H_u K})^{-1}$ is well defined. Finally, left multiplication of $(\bs{I} - \bs{H_u K})^{-1}$ with $\bs{K}$ gives $\bs{\Theta}$, which is also a block lower triangular matrix with the same dimension as $\bs{K}$. The reader can refer \cite{bakolas-auto2018, boyd-cvx-control} for more details. An important observation is that the new decision variable $\bs{\Theta}$ should have the same structure as $\bs{K}$ for the control policy to maintain causality. 
\end{remark}

Finally, the performance index of Problem \ref{prob-def-1} can be expressed in terms of the decision variables $\bs{u}_{\mathrm{ff}}$ and $\bs{\Theta}$. Let us denote this function as $J(\bs{u}_{\mathrm{ff}}, \bs{\Theta})$, where 
\begin{equation}\label{objective-problem2}
    J(\bs{u}_{\mathrm{ff}}, \bs{\Theta}) = J_1(\bs{u}_{\mathrm{ff}}) +J_2(\bs{\Theta}) + J_3(\bs{\Theta}) - J_4(\bs{\Theta}),
\end{equation}
with 
\begin{subequations}\label{Jfunc}
\begin{align}
J_1(\bs{u}_{\mathrm{ff}}) & :=  \lVert\bs{u}_{\mathrm{ff}} \rVert_2^2  +\lambda  \lVert \bs{F}(\bs{\Gamma}\mu_0  + \bs{H_u} \bs{u}_{\mathrm{ff}}) -\mu_d \rVert_2^2, \\
J_2(\bs{\Theta}) &:= \operatorname{tr} (\bs{\Theta} \Bar{S} \bs{\Theta}^\mathrm{T}), \\
J_3(\bs{\Theta}) & := \lambda \operatorname{tr}(\bs{F} (\bs{I} + \bs{H_u \Theta}) \Tilde{S} (\bs{I} + \bs{H_u \Theta})^\mathrm{T} \bs{F}^\mathrm{T}) 
     \nonumber \\ & ~~~~+ \operatorname{tr}(S_d),\\
J_4(\bs{\Theta}) & := 2 \lambda \operatorname{tr}((\sqrt{S_d} \bs{F} (\bs{I} + \bs{H_u \Theta}) \nonumber \\
& ~~~\qquad~~~ \times \Tilde{S} (\bs{I} + \bs{H_u \Theta})^\mathrm{T} \bs{F}^\mathrm{T} \sqrt{S_d})^{1/2}).
\end{align}
\end{subequations}
Thus, Problem~\ref{prob-def-1} can be reduced to the following optimization problem:
\begin{problem}\label{problem2}
Let $\mu_0, \mu_d \in \mathbb{R}^{n_x}$, $S_0, S_d \in \mathbb{S}_{n_x}^{++}$, $N \in \mathbb{Z}^{++}$ and $\{ A_k, B_k, G_k \}_{k=0}^{N}$, where $A_k \in \mathbb{R}^{n_x \times n_x}$, $B_k \in \mathbb{R}^{n_x \times n_u}$ and $G_k \in \mathbb{R}^{n_x \times n_w}$, be given. Find a pair $(\bs{u}_\mathrm{ff}^{\star}, \bs{\Theta}^{\star})$, where $\bs{\Theta}^{\star}$ is a block lower triangular matrix in $\mathbb{R}^{n_uN \times n_x(N+1)}$ and $\bs{u}_{\mathrm{ff}}^{\star} \in \mathbb{R}^{n_uN}$, that minimizes the objective function $J(\bs{u}_\mathrm{ff}, \bs{\Theta})$, which is defined in \eqref{objective-problem2}-\eqref{Jfunc}.
\end{problem}

%

\begin{proposition}\label{diff-convex-prop}
Let $\lambda \in \R{++}$ be given. Then. the functions $J_1$, $J_2$, $J_3$ and $J_4$, which are defined in \eqref{Jfunc}, are convex and thus Problem~\ref{problem2} corresponds to a difference of convex functions program (DCP).
\end{proposition}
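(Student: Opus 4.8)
The plan is to establish the convexity of each of $J_1, J_2, J_3, J_4$ separately; since Problem~\ref{problem2} minimizes $J = (J_1 + J_2 + J_3) - J_4$, exhibiting $J$ as the difference of the convex function $J_1 + J_2 + J_3$ and the convex function $J_4$ immediately certifies that Problem~\ref{problem2} is a DCP.

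Three of the four terms should fall to the same elementary observation: a function of the form $\operatorname{tr}(Z Z^{\mathrm T})$, where $Z$ depends affinely on the decision variable, is a sum of squares of affine functions and hence convex (and adding a constant or a convex quadratic preserves convexity). For $J_1$, the term $\lVert \bs{u}_{\mathrm{ff}}\rVert_2^2$ is a convex quadratic and the term $\lambda\lVert \bs{F}(\bs{\Gamma}\mu_0 + \bs{H_u}\bs{u}_{\mathrm{ff}}) - \mu_d\rVert_2^2$ is the squared Euclidean norm precomposed with an affine map, so $J_1$ is convex. For $J_2$, since $\Bar{S}$ — which coincides with the positive semidefinite matrix $\Tilde{S} = \bs{\Gamma} S_0 \bs{\Gamma}^{\mathrm T} + \bs{H_w} S_{\bs{w}}\bs{H_w}^{\mathrm T}$ upon comparing the two expressions derived for $\E{\bs{u}^{\mathrm T}\bs{u}}{}$ — admits a symmetric positive semidefinite square root $\Bar{S}^{1/2}$, we have $J_2(\bs{\Theta}) = \operatorname{tr}\big((\bs{\Theta}\Bar{S}^{1/2})(\bs{\Theta}\Bar{S}^{1/2})^{\mathrm T}\big)$, of the required form with $Z = \bs{\Theta}\Bar{S}^{1/2}$ linear in $\bs{\Theta}$. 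For $J_3$, the map $\bs{\Theta} \mapsto \bs{F}(\bs{I} + \bs{H_u}\bs{\Theta})\Tilde{S}^{1/2}$ is affine, and $J_3(\bs{\Theta}) = \lambda\operatorname{tr}\big((\bs{F}(\bs{I} + \bs{H_u}\bs{\Theta})\Tilde{S}^{1/2})(\bs{F}(\bs{I} + \bs{H_u}\bs{\Theta})\Tilde{S}^{1/2})^{\mathrm T}\big) + \operatorname{tr}(S_d)$, again of the required form plus a constant.

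The only term requiring a genuine idea is $J_4$, and this is where I expect the main (though still modest) obstacle. The plan is to define $M(\bs{\Theta}) := \sqrt{S_d}\,\bs{F}(\bs{I} + \bs{H_u}\bs{\Theta})\,\Tilde{S}^{1/2}$, where $\sqrt{S_d}$ is the symmetric positive definite square root of $S_d$ and $\Tilde{S}^{1/2}$ the symmetric positive semidefinite square root of $\Tilde{S}$; then $M$ is affine in $\bs{\Theta}$ and $M M^{\mathrm T} = \sqrt{S_d}\,\bs{F}(\bs{I} + \bs{H_u}\bs{\Theta})\,\Tilde{S}\,(\bs{I} + \bs{H_u}\bs{\Theta})^{\mathrm T}\bs{F}^{\mathrm T}\sqrt{S_d}$, so that $J_4(\bs{\Theta}) = 2\lambda\operatorname{tr}\big((M M^{\mathrm T})^{1/2}\big)$. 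The key fact I would invoke is $\operatorname{tr}\big((M M^{\mathrm T})^{1/2}\big) = \operatorname{tr}\big((M^{\mathrm T} M)^{1/2}\big) = \lVert M\rVert_{*}$, which holds because $M M^{\mathrm T}$ and $M^{\mathrm T} M$ share the same nonzero eigenvalues with equal multiplicities while the zero eigenvalues contribute nothing to the trace of the square root; this is precisely the nuclear norm of $M$ as defined in the Notation subsection. Hence $J_4(\bs{\Theta}) = 2\lambda\lVert M(\bs{\Theta})\rVert_{*}$ is a positive multiple of a norm precomposed with an affine map, and is therefore convex.

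Collecting the four cases, $J_1 + J_2 + J_3$ and $J_4$ are convex, so $J$ is a difference of convex functions and Problem~\ref{problem2} is a DCP. Beyond recognizing that the Wasserstein cross-term becomes, after the $\bs{\Theta}$-parametrization, a nuclear norm of an affine function, the remaining work is pure bookkeeping: checking that $\Tilde{S} \succeq 0$ so that a symmetric square root exists with $\Tilde{S}^{1/2}\Tilde{S}^{1/2} = \Tilde{S}$, and keeping track of the symmetric matrix square roots of $S_d$ and $\Tilde{S}$ when expanding $M M^{\mathrm T}$.
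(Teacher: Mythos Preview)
Your proposal is correct and follows essentially the same approach as the paper: the paper cites \cite{bakolas-auto2018} for the convexity of $J_1,J_2,J_3$ (which you spell out directly via the ``squared Frobenius norm of an affine map'' observation), and for $J_4$ both you and the paper factor $\Tilde{S}$ (you via its symmetric square root $\Tilde{S}^{1/2}$, the paper via an eigendecomposition $\Tilde{V}\Tilde{D}^{1/2}$) to exhibit the cross-term as the nuclear norm of an affine map in $\bs{\Theta}$. The factorization choice is immaterial, and the key step---recognizing $\operatorname{tr}\big((MM^{\mathrm T})^{1/2}\big)=\lVert M\rVert_{*}$ and invoking convexity of the nuclear norm---is identical.
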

\begin{proof}
The proof of convexity of the functions $J_1(\cdot)$, $J_2(\cdot)$ and $J_3(\cdot)$ can be found in \cite{bakolas-auto2018}. For the convexity of $J_4(\cdot)$, we need to define the functions $g(\bs{\Theta}) := (\sqrt{S_d} \bs{F} (\bs{I} + \bs{H_u \Theta}) \Tilde{V} \Tilde{D}^{1/2})^T$, where $\Tilde{V}^\mathrm{T} \Tilde{D} \Tilde{V}$ is the eigenvalue decomposition of $\Tilde{S}$, and $f(\bs{\mathcal{A}}) := \text{tr}((\bs{\mathcal{A}}^\mathrm{T} \bs{\mathcal{A}})^{1/2}) = \| \bs{\mathcal{A}} \|_{*}$. Clearly, $g(\cdot)$ is an affine function. In addition, $f(\cdot)$ corresponds to the nuclear norm, which is a valid matrix norm \cite{horn2012matrix} and thus, $f(\cdot)$ is a convex function. Finally, $J_4(\bs{\Theta})$ is convex as the composition of the convex function $f(\cdot)$ with the affine function $g(\cdot)$.
\end{proof}
\begin{remark}
Proposition~\ref{diff-convex-prop} implies that Problem \ref{prob-def-1} can be reduced to a DCP, whose (local) minimizers can be found by means of the so-called convex-concave procedure \cite{ccp, dccp-boyd} which is known to be efficient and robust in practice.
\end{remark}

\section{NLP formulation for KL Divergence Terminal Cost}
If we consider Problem~1 when the terminal cost $\varphi(\rho_N, \rho_d)= \mathrm{KL}(\rho_N || \rho_d)$, then we will arrive at a nonlinear program (NLP) similar to Problem~\ref{problem2}. However, using the variable transformations given in \eqref{transformations} will not yield a DCP as in the case with $\varphi(\rho_N, \rho_d)= W^2(\rho_N, \rho_d)$. Thus, using state history feedback will not necessarily help us associate the covariance steering problem (Problem \ref{prob-def-1}) to a tractable optimization problem. We will instead consider a \textit{memoryless} state feedback (affine) controller in the form: 
\begin{equation}\label{eq:newcontroller}
    u_k = K(k) \big(x_k - \Bar{x}_k\big) + u_{\mathrm{ff}}(k),
\end{equation}
where $\Bar{x}_k = \E{x_k}{}$. Independent of the choice of the controller form, we can express the running cost term of the performance index of Problem~\ref{prob-def-1} as in \eqref{Euu} and also obtain expressions for the mean and variance of the final state $x(N)$ as in \eqref{ExN} and \eqref{CovxN}. The only difference will be in the matrix $\bs{K}$ which is now defined as $\bs{K} := [\operatorname{blkdiag}(K(0), K(1), \dots, K(N-1)) , \bs{0}]$, which is significantly more sparse than the previous case. Because the closed-loop system is linear (given the structure of the controller given in \eqref{eq:newcontroller}), the final state $x(N)$ will be a Gaussian random variable $x(N) \sim \mathcal{N}(\mu_N, S_N)$ and thus we can recover an NLP using the expression for the KL divergence given in \eqref{KL-div}. Since the KL divergence is not symmetric, the final objective function will depend on the order of $\rho_N$ and $\rho_d$. We take $\varphi(\rho_N, \rho_d)= \mathrm{KL}(\rho_N || \rho_d)$. Thus, the objective function can be expressed as follows by plugging \eqref{Euu} and \eqref{mufandSf} into \eqref{KL-div}:
\begin{align}\label{KL-div-obj-func}
   & J(\bs{K}, \bs{u}_{\mathrm{ff}})  =  \| \bs{u}_{\mathrm{ff}}\|^2 \nonumber \\ 
    &+ \operatorname{tr} (\bs{K} (\bs{I} - \bs{H_u K})^{-1} \Tilde{S} (\bs{I} - \bs{H_u K})^\mathrm{-T} \bs{K}^\mathrm{T}) \nonumber \\
    &+  (\lambda/2)\Big[ \operatorname{tr} \big(S_d^{-1} \boldsymbol{F} (\bs{I} - \bs{H_u K})^{-1} \Tilde{S} (\bs{I} - \bs{H_u K})^\mathrm{-T} \boldsymbol{F}^\mathrm{T}  \big) \nonumber  \\
    &+ (\mu_d - \boldsymbol{F} (\bs{\Gamma} \mu_0 + \bs{H_u} \bs{u}_{\mathrm{ff}}))^\mathrm{T} S_d^{-1}
    (\mu_d - \boldsymbol{F} (\bs{\Gamma} \mu_0 + \bs{H}_u \bs{u}_{\mathrm{ff}})) \nonumber \\
    &- n_x + \log(\det S_d) \nonumber \\
    &- \log\big(\det (\boldsymbol{F} (\bs{I} - \bs{H_u K})^{-1} \Tilde{S} (\bs{I} - \bs{H_u K})^\mathrm{-T} \boldsymbol{F}^\mathrm{T})\big)
    \Big].
\end{align}

In this case, Problem~1 reduces to the following optimization problem:
\begin{problem}\label{problem3}
Let $\mu_0, \mu_d \in \mathbb{R}^{n_x}$, $S_0, S_d \in \mathbb{S}_{n_x}^{++}$, $N \in \mathbb{Z}^{++}$ and $\{ A_k, B_k, G_k \}_{k=0}^{N}$, where $A_k \in \mathbb{R}^{n_x \times n_x}$, $B_k \in \mathbb{R}^{n_x \times n_u}$ and $G_k \in \mathbb{R}^{n_x \times n_w}$, be given. Find a pair $(\bs{K}^{\star}, \bs{u}_{\mathrm{ff}}^{\star})$, where $\bs{K}^{\star} := [\operatorname{blkdiag}(K^{\star}(0), K^{\star}(1), \dots, K^{\star}(N-1)) , \bs{0}]$ where $K^{\star}(i)\in \mathbb{R}^{n_u \times n_x}$, for $i\in \{0,\dots, N-1\}$, and $\bs{u}_{\mathrm{ff}}^{\star} \in \mathbb{R}^{n_uN}$, that minimizes the objective function $J(\bs{K}, \bs{u}_{\mathrm{ff}})$ defined in \eqref{KL-div-obj-func}.
\end{problem}

Because the objective function given in \eqref{KL-div-obj-func} is not convex in $(\bs{K}, \bs{u}_{\mathrm{ff}})$, Problem \ref{problem3} corresponds to a non-convex NLP, in general. In addition, Problem \ref{problem3} does not correspond to a DCP, but local minimizers of this problem can still be computed by using nonlinear interior point methods and solvers such as IPOPT\cite{ipopt} and the scipy optimization package~\cite{virtanen2020scipy}, which are readily available.


\section{Numerical Experiments}
In this section, we present numerical experiments where we used the convex-concave procedure (CCP) with MOSEK \cite{mosek2010mosek} to solve Problem \ref{problem2} and CVXPY \cite{diamond2016cvxpy} for modeling of convexified subproblems. To solve Problem \ref{problem3} which is a nonlinear program, we used the scipy optimization \cite{virtanen2020scipy} implementation of the L-BFGS-B algorithm. We consider the linear state space model \eqref{linear-system} with $A_k = \big[ \begin{smallmatrix} 1 & \Delta t \\ 0 & 1\end{smallmatrix} \big]$, $B_k = [ 0~\Delta t]^{\mathrm{T}}$, $G_k = \bs{I}$, $w_k \sim \mathcal{N}(\mathcal{\bs{0}, \gamma \bs{I}})$, $\forall k \in \mathbb{Z}^{+}$. We also took $x_0 \sim \mathcal{N}(\mu_0, S_0)$, $\mu_0 = [0, 1]^{T}$, $S_0 = 10  \bs{I}$, $\mu_d = [10, 12]^\mathrm{T}$, $S_d = \bs{I}$, $\Delta t = 1$. In addition, $N \in \{10,20,30,40,50\}$ and $\gamma \in \{ 1, 0.5 \}$ are chosen for different experiments to compare computation time. 

Figure \ref{confidence-evolution-compare} illustrates the evolution of the state distribution of the system. We use $\lambda = 10.0$ for the Wasserstein distance case and $\lambda=70.0$ for the KL divergence case for scaling purposes. The noise intensity parameter $\gamma = 1$ and the problem horizon $N=20$ in both experiments. The final state covariance matrices are $\big[ \begin{smallmatrix} 2.81 & 0.19 \\ 0.19 & 1.98  \end{smallmatrix} \big]$ for the Wasserstein distance and $\big[\begin{smallmatrix} 3.65 & 0.06 \\ 0.06 & 2.21 \end{smallmatrix}\big]$ for the KL divergence. Since both problems are non-convex, the obtained solutions are expected to depend on the initial guess. However, repeating the experiments with different initial guesses did not change the final cost and the covariance matrices significantly even though the control policy parameters did change. 
\begin{figure}[ht]
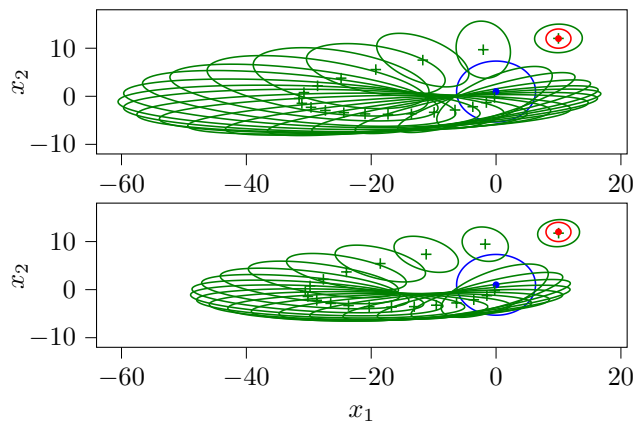

    \begin{minipage}{1.0\linewidth}
        \centering
        \input{plots/evolution-KL}
    \end{minipage}
    \begin{minipage}{1.0\linewidth}
        \centering
        \input{plots/evolution-wasserstein}
    \end{minipage}
    \caption{Evolution of 2-$\sigma$ confidence ellipses (in green) for the controlled system based on KL divergence (top) and Wasserstein distance (bottom) terminal cost functions. The blue ellipses correspond to the 2-$\sigma$ confidence ellipses of the initial state, whereas the red ellipses to the desired distribution.}
    \label{confidence-evolution-compare}
\end{figure}

In Figure \ref{default-N40}, sample paths of the controlled system are shown for $N=40$. We observe that the optimal control policy allows the spread of trajectories (uncertainty) to ``grow" in the beginning and tries to reduce it down towards the end of the time horizon. This result is expected given that the state covariance is not penalized in the running cost term of the performance index in Problem~1 whereas the uncertainty in the control input is penalized by the term $\operatorname{tr}(\bs{\Theta} \Bar{S} \bs{\Theta})$.

\begin{figure}[ht]
    \centering
    \includegraphics[width=\linewidth]{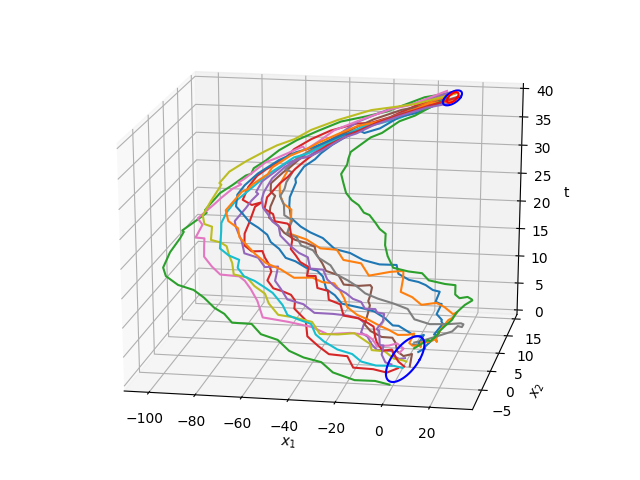}
    \caption{15 sample paths of controlled system with Wasserstein distance terminal cost where blue ellipses are 2-$\sigma$ confidence regions for initial state and final state and red ellipse is the 2-$\sigma$ confidence region of desired distribution. ($\gamma=1$, $\lambda=10.0$, $N=40$).}
    \label{default-N40}
\end{figure}

In Table \ref{computation-table}, we compare the computation time of the NLP solver \cite{virtanen2020scipy} and our CCP based approach for different problem instances with different values for the noise intensity parameter $\gamma$ and the problem horizon $N$. In our simulations, we used the termination condition $(f_k - f_{k-1})/f_k \leq \epsilon$ where $f_k$ is the value of objective function at the $k$th iteration and $\epsilon$ is the convergence tolerance which was taken to be $10^{-5}$. We observe that our approach reduces the computation time significantly in all cases. 

\begin{table}[ht]
    \centering
    \caption{Computation time (in seconds) for different problem instances for the Wasserstein Distance Terminal Cost}
    \label{computation-table}
    \begin{tabular}{c|c|c|c|c|c}
          $\gamma = 1$&  N=10 & N=20 & N=30 & N=40 & N=50\\
          \hline
         NLP & 7.88 & 44.30 & 120.93 & 348.39 & 643.65 \\
         CCP & 0.93 & 7.65 & 12.81 & 32.85 & 68.72 \\
         \hline
         \hline
         $\gamma = .5$&  N=10 & N=20 & N=30 & N=40 & N=50\\
         \hline
         NLP & 18.01 & 28.57 & 209.93 & 510.14 & 907.40 \\
         CCP & 2.89 & 17.11 & 53.68 & 156.29 & 314.52
    \end{tabular}
\end{table}

\section{Conclusion}
We have addressed the covariance steering problem with soft terminal constraints based on two different problem formulations in which the terminal cost is associated with either the squared Wasserstein distance or the KL divergence between the terminal state distribution and a desired distribution. We have shown that in the case with the squared Wasserstein distance terminal cost, the proposed covariance steering problem reduces to a DCP which can be solved efficiently by the so-called convex-concave procedure along with convex optimization solvers. Our numerical experiments have shown that our approach reduces significantly the computation time compared to off-the-shelf solvers. In our future work, we plan to extend our approach to covariance steering problems for nonlinear stochastic systems.


\printbibliography[title=references]
\end{document}